\newcounter{theorem}
\renewcommand{\thetheorem}{\arabic{section}.\arabic{theorem}}
\newenvironment{thm}[1]{\par\addvspace{0.5cm}
    \begin{sloppypar}\refstepcounter{theorem}%
    {\bf #1 \thetheorem.}\it{}}{\end{sloppypar}}
\newcommand{\eh}{\hfill}\newlength{\sperr}
\newenvironment{theorem}{\begin{thm}{Theorem}} {\end{thm}}
\newenvironment{lemma}{\begin{thm}{Lemma}} {\end{thm}}
\newenvironment{corollary}{\begin{thm}{Corollary}} {\end{thm}}
\newenvironment{defi}[1]{\par\addvspace{0.5cm}
\begin{sloppypar}\refstepcounter{theorem}%
{\bf #1 \thetheorem.}\rm{}}{\end{sloppypar}}
\newenvironment{definition}{\begin{defi}{Definition}}{\end{defi}}
\newenvironment{remark}{\begin{defi}{Remark}}{\end{defi}}
\newenvironment{proof}{{\settowidth{\sperr}{\rm Proof}
\par\addvspace{0.3cm}\parbox[t]{1.3\sperr}{\rm P\eh r\eh o\eh o\eh f\eh. }%
}}{\nopagebreak\mbox{}\hfill $\Box$\par\addvspace{0.25cm}}
\newcommand{\al}{\alpha}
\newcommand{\bt}{\beta}
\newcommand{\om}{\omega}
\newcommand{\Om}{\Omega}
\newcommand{\lb}{\lambda}
\newcommand{\vi}{\varphi}
\newcommand{\intl}{\int\limits}
\newcommand{\rone}{\mathbb{R}^1}
\newcommand{\rn}{\mathbb{R}^n}
\newcommand{\vs}{\vspace}
\newcommand{\tcr}{\textcolor{red}}
\newcommand{\ti}{\textit}
\begin{document}
\par\indent

\begin{center}
\LARGE On a Muckenhoupt-type condition for Morrey spaces
\end{center}

\

\centerline{\large  Natasha Samko}

\centerline{\it University of Algarve, Portugal.}

\centerline{\it E-mail: nsamko@ualg.pt, \ nsamko@gmail.com}

\
\noindent{\bf Abstract}

\vspace{2mm}

{\small As is known, the class of weights  for Morrey type spaces $\mathcal{L}^{p,\lb}(\rn) $ for which the maximal and/or singular operators are bounded, is different from the known Muckenhoupt class $A_p$ of such weights for the Lebesgue spaces $L^p(\Om)$. For instance, in the case of power weights $|x-a|^\nu, \ a\in \mathbb{R}^1,$ the singular operator (Hilbert transform) is bounded in  $L^p(\mathbb{R})$, if and only if $-1<\nu <p-1$, while it is bounded in the Morrey space $\mathcal{L}^{p,\lb}(\mathbb{R}), 0\le \lb<1$, if and only if the exponent $\al$ runs the shifted interval  $\lb-1<\nu <\lb+p-1.$ A description of all the admissible weights similar to the Muckenhoupt class $A_p$ is an open problem. In this paper, for the one-dimensional case, we introduce the class $A_{p,\lb}$ of weights, which turns into the Muckenhoupt class $A_p$ when $\lb=0$
and show that the belongness of a weight to $A_{p,\lb}$ is necessary for the boundedness of the Hilbert transform in the one-dimensional case. In the case $n>1$ we also provide some $\lb$-dependent \textit{\`a priori}  assumptions on weights and give some estimates of weighted  norms $\|\chi_B\|_{p,\lb;w}$ of the characteristic functions of balls.

\noindent

{\it Key Words:} Morrey spaces; weights; maximal operator; singular operators;

\vspace{4mm}

\par\indent
\section{Introduction}
\setcounter{equation}{0} \setcounter{theorem}{0}

The well
known Morrey spaces $\mathcal{L}^{p,\lb}$ introduced in \cite{405a} in
relation to the study of partial differential equations, and presented in
various books, see \cite{187a}, \cite{348a}, \cite{655c}, as well as their various generalizations, were widely
studied during last decades, including the study of classical operators
of harmonic analysis - maximal, singular and potential operators - in these
spaces; we refer for instance to   papers \cite{9e,9f,18f,26c}, \cite{69ad,
69ac,87b,160bz,107f}, \cite{412zz,412z, 412zb,463b,472a,473,504za},
\cite{621a,638a,641a}, where Morrey spaces on metric measure spaces may be
also found. Surprisingly, weighted estimates of these classical operators, in
fact, almost were not studied. Recently, in \cite{539gb} there were proved weighted
$p\to p$-estimates in Morrey spaces for Hardy operators on $\mathbb{R}_+$
and one-dimensional singular
 operators (on $\mathbb{R}$ or on Carleson curves in the complex plane). More general weighted estimates may be found in
\cite{539gc} and \cite{479zzab}. In some papers there were considered special weighted situations when the weight was a power of the function $\vi$ defining the generalized Morrey space, as for instance, in \cite{326bh}, \cite{600zb}.

Let $\Om\subseteq \rn$ and $L^{p,\lb}(\Om ,w)$ denote the classical  Morrey space with weight:
$$L^{p,\lb}(\Om ,w):=\left\{f: \ \|f\|_{p,\lb;w}<\infty\right\}, \ \ \ \ 1\le p<\infty, \ \ 0\le \lb\le 1,$$
by the norm
$$\|f\|_{p,\lb;w}: = \sup\limits_{x,r}\left(\frac{1}{|B(x,r)|^\lb}\intl_{B(x,r)}|f(y)|^p w(y)\,dy\right)^\frac{1}{p},$$
where we always assume that a function $f$ is continues beyond $\Om$ as zero whenever necessary.
We  write $\|f\|_{p,\lb}$ in the non-weighted case $w\equiv 1.$

\vs{4mm}
As shown in \cite{539gb}, the Muckenhoupt class $A_p$ may not be an appropriate class of weights for the case of Morrey spaces.
The appropriate "Muckenhoupt-type" class for the Morrey spaces must  depend on the parameter $\lb.$
As proved in  \cite{539gb} for the one-dimensional case,  the singular integral operator
$$Sf(x)=\frac{1}{\pi}\intl_{\mathbb{R}}\frac{f(t)\,dt}{t-x}$$
is bounded in the space
$L^{p,\lb}(\rone,w)$ with the power weight $w(x)=|x-a|^\nu, \ a\in\Om ,$ if and only if
\begin{equation}\label{1}
\lb-1 <\nu<\lb+p-1
\end{equation}
which is a shifted interval in comparison with the Muckenhoupt condition $-1 <\nu<p-1.$
Thus, condition \eqref{1} partially deletes Muckenhoupt power weights, but on the other hand, adds new ones.

 As is known, a description of all the admissible weights for Morrey spaces, similar to the Muckenhoupt class $A_p$ is an open problem.
 Since the $A_p$-condition
 \begin{equation}\label{next}
A_p: \hspace{15mm}   \sup\limits_{B}\left(\frac{1}{|B|}\intl_{B} w(x)\,dx\right)\left(\frac{1}{|B|}\intl_B w^{1-p^\prime}(y)\,dy\right)^{p-1}  <\infty
\end{equation}
 has the form
 \begin{equation}\label{Muckenhouptrewritten}
 \sup\limits_{B}\frac{1}{|B|}\|v\|_{L^p(B)}\left\|\frac{1}{v}\right\|_{L^{p^\prime}(B)}<\infty, \ \ \ \ \ v=w^\frac{1}{p},
 \end{equation}
 where $\sup$ is taken with respect to all balls in $\rn$, one can expect that the corresponding Muckenhoupt-type  class $A_{p,\lb}$ may be defined by the condition
\begin{equation}\label{Muckenhouptindual}
 A_{p,\lb}: \hspace{15mm} \sup\limits_{B}\frac{1}{|B|}\|v\|_{L^{p,\lb}(B)}\left\|\frac{1}{v}\right\|_{[L^{p,\lb}]^\prime (B)}<\infty, \ \ \ \ \ v=w^\frac{1}{p},
 \end{equation}
 where $[L^{p,\lb}]^\prime$ may stand for the dual (or predual ?) of the Morrey space.  The  preduals of Morrey spaces were studied in   \cite{9eaz},  \cite{9f},
\cite{248b} and  \cite{731c}. Their characterizations are known to be given in capacitory terms and/or in terms of  the so called
 $(q,\lb)$-atomic decompositions, which makes them uneasy in concrete applications.

  We introduce a certain  class $\mathcal{A}_{p,\lb}=\mathcal{A}_{p,\lb}(\rn)$ of weights, which might be conditionally called called a \ti{pre-Muckenhoupt class for Morrey spaces. It turns into the Muckenhoupt class $A_p$ when $\lb=0$
and  we show that the belongness of a weight to this class is necessary  for the one-dimensional singular integral operator (Hilbert transform) to be bounded in the Morrey space}. In the case $n>1$ we also provide some $\lb$-dependent \textit{\`a priori}  assumptions on weights and give some estimates of weighted  norms $\|\chi_B\|_{p,\lb;w}$ of the characteristic functions of balls.

Problems in proving the sufficiency of the introduced $\mathcal{A}_{p,\lb}$-condition are caused by difficulties of transferring  various known properties of the class $A_p$, such as for instance its openness with respect to $p$,  to the class $\mathcal{A}_{p,\lb}.$ We hope to have advances in this relation in another publication.

\section{Some  \textit{\`a priori} assumptions and the class $\mathcal{A}_{p,\lb}$}\label{apriori}
\setcounter{equation}{0} \setcounter{theorem}{0}

The definition \eqref{next}
of the Muckenhoupt class $A_p$ for the spaces $L^p$ (the case $\lb=0$) preassumes that
\begin{equation}
\label{oldass}
  \textrm{the functions} \ \ \  w \ \ \ \textrm{and} \ \ \  w^{1-p^\prime} \ \ \ \textrm{are locally in} \ \ \  L^p.
\end{equation}
What should be
 similar \textit{\`a priori} assumptions for the Morrey spaces?  In the case of the power weight $w=|x-a|^\nu$
  the conditions in \eqref{oldass} mean that $\nu>-n$ and $\nu<n(p-1),$ respectively.
In the case of Morrey spaces, the corresponding interval $(-n,n(p-1))$  should be   shifted to $({n\lb}-n,{n\lb}+n_(p-1))$, as noted
in \eqref{1} in the one-dimensional case $n=1$.  Thus for general weights we expect that the \textit{\`a priori} assumption  $w\in L^1_{\textrm{loc}}$ must be replaced by some more restrictive condition, while the condition
$w^{1-p^\prime}\in L^1_{\textrm{loc}}$ is expected to be weakened, both in dependence on the parameter $\lb.$

As a substitution of the first assumption in \eqref{oldass} we will use now the following natural condition on the weight $w$:
\begin{equation}\label{1intro}
\chi_B\in L^{p,\lb}(\Om ,w) \ \ \ \ \Longleftrightarrow \ \ \ \ \chi_B w^\frac{1}{p}\in L^{p,\lb}(\Om)
\end{equation}
for all the balls $B,$ where  $\chi_E$ denotes the characteristic function of an open set $E\subset \Om.$
As a substitution of another  condition $w^{1-p^\prime}\in L^1_{\textrm{loc}}$ we  introduce the condition
\begin{equation}\label{2intro}
\chi_B \in L^{p,\lb}\left(\Om ,w^{-\frac{1-\lb}{\lb+p-1}}\right) \ \ \ \Longleftrightarrow \ \ \ \
\chi_B w^{-\frac{1}{\lb+p-1}} \in L^{p,\lb}\left(\Om ,w\right),
 \end{equation}
which turns into $w^{1-p^\prime}\in L^1_{\textrm{loc}}$ when $\lb=0$.
With the notation $w(E): = \intl_E w(x)\,dx,$ the conditions  \eqref{1intro} and  \eqref{2intro} have the form
\begin{equation}\label{haveform}
\sup\limits_{B}\frac{w(B\cap B(x_0,r))}{|B|^\lb}<\infty, \ \  \ \ \sup\limits_{B}\frac{w^{-\frac{1-\lb}{\lb+p-1}}(B\cap B(x_0,r))}{|B|^\lb}<\infty, \end{equation}
respectively, where the $\sup$ is taken with respect to all  balls $B\subset \Om .$
\begin{definition}\label{def1}
A weight function $w$ is called \textit{$(p,\lb)$-admissible weight}, if it satisfies the assumptions
\eqref{1intro}-\eqref{2intro}.
\end{definition}

The condition \eqref{1intro} of belongness of functions $\chi_B$ to the weighted space $L^{p,\lb}$ is quite natural. As for the exponent  $-\frac{1-\lb}{\lb+p-1}$ in the condition  \eqref{2intro}, its choice originated in particular from the upper bound in the conditions \eqref{1} known to be necessary and sufficient for power weights.

Now we introduce the class $\mathcal{A}_{p,\lb}$  by the following definition.

\begin{definition}\label{def}
By $\mathcal{A}_{p,\lb}$ we denote the class of $(p,\lb)$-admissible weights satisfying the condition
\begin{equation}\label{Muckenhoupt}
\mathcal{A}_{p,\lb}: \hspace{15mm} \sup\limits_{B}\  \frac{\|\chi_B\|_{p,\lb;w}}{\|\chi_B \|_{p,\lb;w_\ast}}\left(\frac{1}{|B|}\intl_B w^{-\frac{1}{\lb+p-1}}dy\right) <\infty, \ \ \ \  w_\ast=w^{-\frac{1-\lb}{\lb+p-1}},
\end{equation}
where $\sup$ is taken with respect to all balls.
Obviously we obtain the Muckenhoupt class $A_p$ when $\lb=0.$

\begin{remark}\label{rem}
While  $\mathcal{A}_{p,\lb}=A_{p,\lb}=A_p$ in the case $\lb=0$, a comparison of the classes  $\mathcal{A}_{p,\lb}$ and $A_{p,\lb}$ in the case $\lb>0$ is an open question. Note that
\begin{equation}\label{comparison}
A_{p,\lb} \subseteq \mathcal{A}_{p,\lb} \ \ \ \Longleftrightarrow\ \ \ \ \ \intl_B v^{-\frac{p}{\lb+p-1}}\, dy \le C
 \left\|v^{-\frac{1-\lb}{\lb+p-1}}\right\|_{L^{p,\lb}(B)} \left\|\frac{1}{v}\right\|_{[L^{p,\lb}]^\prime(B)}
\end{equation}
the latter inequality may be also rewritten in the form
\begin{equation}\label{comparisonequiv}
\intl_B u\,dy \le C
 \left\|u^{\frac{1-\lb}{p}}\right\|_{L^{p,\lb}(B)} \left\|u^{1-\frac{1-\lb}{p}}\right\|_{[L^{p,\lb}]^\prime(B)}, \ \ \ \ \
 u=v^{-\frac{p}{\lb+p-1}};
\end{equation}
(a \ti{H\"older-type looking inequality}).
\end{remark}

\section{Necessity of the $\mathcal{A}_{p,\lb}$-condition for the Hilbert transform}
\setcounter{equation}{0} \setcounter{theorem}{0}
We pass to the one-dimensional case and consider the singular operator $S$ (Hilbert transform).
 We follow the known approach to prove  the necessity of the $A_p$-conditions for the boundedness of the singular operator known for the Lebesgue spaces, as presented for instance in \cite{317zda}.

We find it convenient to use the notation
$$I=I(x,r)=\{y: x-r<y<x+r\}$$
for the one-dimensional balls.
We assume that the weight $w$ is $(p,\lb)$-admissible in the sense of Definition \ref{def1} which
now means that
\begin{equation}\label{1introcop}
\chi_I\in L^{p,\lb}(\mathbb{R},w)
\end{equation}
and
\begin{equation}\label{2introcop}
\chi_I \in L^{p,\lb}\left(\mathbb{R},w^{-\frac{1-\lb}{\lb+p-1}}\right),
 \end{equation}
for all  intervals $I\subset \mathbb{R}.$

In the sequel, by $\ I^{\prime}$ and $\ I^{\prime\prime}$ we denote two arbitrary  adjoint intervals
$$I^\prime=I(x^\prime,r^\prime),\ \  \ I^{\prime\prime}=I(x^{\prime\prime},r^{\prime\prime}) $$
that is, we suppose that either
$x^{\prime}+r^{\prime}=x^{\prime\prime}-r^{\prime\prime}$ or $x^{\prime}-r^{\prime}=x^{\prime\prime}+r^{\prime\prime}.$
We always have the pointwise estimate
\begin{equation}\label{9below}
(S\chi_{I^\prime})(x) \ge \frac{1}{2}\ \ \ \ \textrm{for all}\ \ \ x\in I^{\prime\prime}
\end{equation}
and all $I^\prime$ and $I^{\prime\prime}$ with
$|I^\prime|=|I^{\prime\prime}|\le 1.$

\vs{3mm} Suppose that the singular operator $S$  is bounded in the  weighted Morrey space:
\begin{equation}\label{9}
\|Sf\|_{p,\lb;w}\le k
\|f\|_{p,\lb;w}.
\end{equation}
Note that $k\ge 1$, which follows from the fact that $S^2=-I.$

\begin{remark}\label{rem1}
In the case of Lebesgue spaces ($\lb=0$) it is known that the boundedness \eqref{9} implies both the conditions
\eqref{1introcop} and \eqref{2introcop}. A similar direct proof of the validity of \eqref{1introcop}, for instance, from \eqref{9} does not hold, because it is based on the use of  duality arguments, which fails in the case of Morrey spaces.
 Instead we suppose that \eqref{1introcop} and \eqref{2introcop} \ti{\`a priori} hold.
\end{remark}

\begin{lemma}\label{lem3} Assume that \eqref{9}
holds  and the weight $w$ has the property \eqref{1introcop}. Then
\begin{equation}\label{9n}
\frac{1}{2k}\|\chi_{I^{\prime}}\|_{p,\lb;w}\le \|\chi_{I^{\prime\prime}}\|_{p,\lb;w}\le 2k \|\chi_{I^{\prime}}\|_{p,\lb;w}
\end{equation}
for all adjoint intervals $I^\prime$ and $I^{\prime\prime}$ with equal lengths
$|I^{\prime}|=|I^{\prime\prime}|\le 1.$
\end{lemma}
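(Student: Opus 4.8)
The plan is to derive each half of the double inequality \eqref{9n} directly from the boundedness \eqref{9} together with the pointwise lower bound \eqref{9below}, exploiting the symmetry of the adjoint-interval configuration. First I would fix adjoint intervals $I^\prime$ and $I^{\prime\prime}$ of equal length $|I^\prime|=|I^{\prime\prime}|\le 1$. By \eqref{9below} we have $(S\chi_{I^\prime})(x)\ge \frac12$ for every $x\in I^{\prime\prime}$, so on $I^{\prime\prime}$ one has $\chi_{I^{\prime\prime}}(x)=1\le 2(S\chi_{I^\prime})(x)$, while off $I^{\prime\prime}$ the left-hand side vanishes. Hence the pointwise majorization $\chi_{I^{\prime\prime}}\le 2|S\chi_{I^\prime}|$ holds on all of $\mathbb{R}$.

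The next step is to transfer this pointwise bound to the weighted Morrey norm. For every ball (interval) $B$ we get $\intl_B \chi_{I^{\prime\prime}}^p\, w\le 2^p\intl_B |S\chi_{I^\prime}|^p\, w$; dividing by $|B|^\lb$, taking the supremum over all $B$, and extracting the $p$-th root yields, by the homogeneity and monotonicity of $\|\cdot\|_{p,\lb;w}$, the estimate $\|\chi_{I^{\prime\prime}}\|_{p,\lb;w}\le 2\|S\chi_{I^\prime}\|_{p,\lb;w}$. Applying the assumed boundedness \eqref{9} to $f=\chi_{I^\prime}$ then gives $\|\chi_{I^{\prime\prime}}\|_{p,\lb;w}\le 2k\,\|\chi_{I^\prime}\|_{p,\lb;w}$, which is the right-hand inequality of \eqref{9n}. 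All the quantities involved are finite, since the \textit{\`a priori} admissibility assumption \eqref{1introcop} guarantees $\|\chi_{I^\prime}\|_{p,\lb;w}<\infty$ (and likewise for $I^{\prime\prime}$).

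Finally, the left-hand inequality follows by symmetry. The defining relation for adjoint intervals, namely $x^\prime+r^\prime=x^{\prime\prime}-r^{\prime\prime}$ or $x^\prime-r^\prime=x^{\prime\prime}+r^{\prime\prime}$, is symmetric in $I^\prime$ and $I^{\prime\prime}$, so \eqref{9below} applies equally with the roles of the two intervals interchanged, giving $(S\chi_{I^{\prime\prime}})(x)\ge\frac12$ on $I^\prime$. Repeating the argument of the preceding paragraph then produces $\|\chi_{I^\prime}\|_{p,\lb;w}\le 2k\,\|\chi_{I^{\prime\prime}}\|_{p,\lb;w}$, i.e. $\frac{1}{2k}\|\chi_{I^\prime}\|_{p,\lb;w}\le \|\chi_{I^{\prime\prime}}\|_{p,\lb;w}$, and combining the two bounds yields \eqref{9n}.

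I do not expect a serious obstacle here, as the argument is the Morrey-space analogue of the classical proof of the necessity of $A_p$. The only points that require care are: (i) that pointwise domination passes to domination of the Morrey norm, which is immediate from the integral form of $\|\cdot\|_{p,\lb;w}$; and (ii) that the symmetry of the adjoint relation genuinely licenses the interchange of $I^\prime$ and $I^{\prime\prime}$. The finiteness of all the norms, needed to render the chain of inequalities meaningful, is precisely what assumption \eqref{1introcop} supplies; this is also the reason \eqref{1introcop} (rather than boundedness alone, cf. Remark \ref{rem1}) is imposed as a hypothesis.
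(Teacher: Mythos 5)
Your proposal is correct and follows essentially the same route as the paper: substitute $f=\chi_{I^\prime}$ into \eqref{9} (legitimate by \eqref{1introcop}), use the pointwise bound \eqref{9below} on $I^{\prime\prime}$ to dominate $\|\chi_{I^{\prime\prime}}\|_{p,\lb;w}$ by $2\|S\chi_{I^\prime}\|_{p,\lb;w}\le 2k\|\chi_{I^\prime}\|_{p,\lb;w}$, and obtain the reverse inequality by interchanging the roles of $I^\prime$ and $I^{\prime\prime}$. The only cosmetic difference is that you phrase the key step as a global pointwise majorization $\chi_{I^{\prime\prime}}\le 2|S\chi_{I^\prime}|$ plus monotonicity of the Morrey norm, whereas the paper restricts the integration to $I(x,r)\cap I^{\prime\prime}$ inside the supremum; these are the same argument.
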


\begin{proof}
We substitute the function $f=\chi_{I^\prime}$ into \eqref{9}, which is possible by  \eqref{1introcop}, and obtain
$$\sup\limits_{x,r}\frac{1}{r^{\lb}}\intl_{I(x,r)}|S\chi_{I^\prime}(y)|^pw(y)\,dy\le k^p
\sup\limits_{x,r}\frac{1}{r^\lb}\intl_{I(x,r)}|\chi_{I^\prime}(y)| w(y)\,dy.$$
Then moreover
$$\sup\limits_{x,r}
\frac{1}{r^\lb}
\intl_{I(x,r)\cap I^{\prime\prime}}
|S\chi_{I^\prime}(y)|^p w(y)\,dy\le k^p
\sup\limits_{x,r}
\frac{1}{r^\lb}
\intl_{I(x,r)}|\chi_{I^\prime}(y)| w(y)\,dy$$
and consequently
$$\sup\limits_{x,r}\frac{1}{r^\lb}
\intl_{I(x,r)\cap I^{\prime\prime}}w(y)\,dy\le (2k)^p
\sup\limits_{x,r}\frac{1}{r^\lb}\intl_{I(x,r)}|\chi_{I^\prime}(y)| w(y)\,dy$$
by \eqref{9below}, i.e. we arrive at the right-hand side inequality in \eqref{9n}.
Similarly the left-hand side inequality
is proved.
\end{proof}

\begin{theorem}\label{lem4lb}
Let the assumption \eqref{9} hold  with a $(p,\lb)$-admissible weight $w$. Then
\begin{equation}\label{10}
\sup\limits_{I: |I|\le 1}\  \frac{\|\chi_I\|_{p,\lb;w}}{\|\chi_I \|_{p,\lb;w_\ast}}\left(\frac{1}{|I|}\intl_I w^{-\frac{1}{\lb+p-1}}dy\right) \le 2 k<\infty, \ \ \ \  w_\ast=w^{-\frac{1-\lb}{\lb+p-1}}
\end{equation}
with $  k=\|S\|_{L^{p,\lb}(\mathbb{R},w)\to L^{p,\lb}(\mathbb{R},w)}.$
\end{theorem}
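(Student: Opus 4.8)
The plan is to adapt the classical proof of the necessity of the $A_p$-condition (as in \cite{317zda}) by testing the boundedness \eqref{9} on a single, carefully weighted characteristic function and then using Lemma \ref{lem3} to pass from one interval to its neighbour. The guiding principle is that Lemma \ref{lem3}, which comes from the \emph{unweighted} test function $\chi_{I'}$, already encodes the comparison of $w$-norms of adjoint intervals; what still has to be produced are the factors $\|\chi_I\|_{p,\lb;w_\ast}$ and $\frac{1}{|I|}\intl_I w^{-\frac1{\lb+p-1}}$, and a suitably \emph{weighted} test function is exactly what generates them.

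Concretely, I would fix an interval $I=I'$ with $|I|\le 1$ and take
$$f=w^{-\frac1{\lb+p-1}}\chi_I,$$
the natural analogue of the classical choice $w^{1-p'}\chi_I$ to which it reduces when $\lb=0$. The first point is that $f$ is an admissible input for \eqref{9}, i.e. $f\in L^{p,\lb}(\mathbb{R},w)$. Using the pointwise identity $\bigl(w^{-\frac1{\lb+p-1}}\bigr)^{p}w=w_\ast$ one gets at once
$$\|f\|_{p,\lb;w}=\|\chi_I\|_{p,\lb;w_\ast},$$
so the finiteness of $\|f\|_{p,\lb;w}$ is precisely the \`a priori assumption \eqref{2introcop}. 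I would stress here (cf. Remark \ref{rem1}) that in the Morrey setting this membership cannot be recovered from \eqref{9} by duality, which is exactly why \eqref{2introcop} is postulated: it is nothing but the statement that the chosen test function lies in the space.

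Next I would produce a pointwise lower bound for $Sf$ on an adjoint interval $I''$ of the same length $|I''|=|I|$. Since $f\ge 0$ is supported on $I'$ and the kernel $\frac1{t-x}$ keeps a fixed sign for $t\in I'$, $x\in I''$, with $|t-x|\le 2|I|$, the same elementary estimate underlying \eqref{9below} gives
$$|Sf(x)|\ge \frac{c}{|I|}\intl_I w^{-\frac1{\lb+p-1}}\,dy,\qquad x\in I'',$$
with an absolute constant $c$. Feeding this into \eqref{9} and restricting the supremum defining $\|Sf\|_{p,\lb;w}$ to intervals meeting $I''$ yields
$$\frac{c}{|I|}\left(\intl_I w^{-\frac1{\lb+p-1}}\,dy\right)\|\chi_{I''}\|_{p,\lb;w}\le k\,\|\chi_I\|_{p,\lb;w_\ast}.$$

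Finally I would invoke Lemma \ref{lem3} in the form $\|\chi_{I''}\|_{p,\lb;w}\ge\frac1{2k}\|\chi_I\|_{p,\lb;w}$ to replace the norm over the neighbour $I''$ by the norm over $I$ itself, and then rearrange to isolate the quotient in \eqref{10}; collecting the constants from the pointwise bound and from \eqref{9n}, together with the supremum over $|I|\le 1$ (the length restriction being needed both for \eqref{9below} and for Lemma \ref{lem3}), produces the asserted estimate. The main obstacle I anticipate is precisely this last transfer: the weighted test function controls $\|\chi_{I''}\|_{p,\lb;w}$ only on the \emph{adjacent} interval, and since no doubling of $w$ is available \`a priori, the sole bridge back to $I$ is Lemma \ref{lem3}; keeping the numerical constant sharp therefore hinges on how tightly $|Sf|$ is estimated on $I''$ and on the doubling constant in \eqref{9n}. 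A secondary, more conceptual subtlety—already noted above—is that the admissibility of $f$ rests on the hypothesis \eqref{2introcop} rather than on \eqref{9}, so the argument genuinely requires $w$ to be $(p,\lb)$-admissible.
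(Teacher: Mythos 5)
Your proposal is correct and follows essentially the same route as the paper: the same test function $f=\chi_{I'}w^{-1/(\lb+p-1)}$ (admissible by \eqref{2introcop}, with $\|f\|_{p,\lb;w}=\|\chi_{I'}\|_{p,\lb;w_\ast}$), the same pointwise lower bound for $Sf$ on the adjacent interval $I''$ (the paper's constant is $c=\tfrac12$), and the same final appeal to Lemma \ref{lem3} to return from $\|\chi_{I''}\|_{p,\lb;w}$ to $\|\chi_{I'}\|_{p,\lb;w}$. Your closing remark about constant bookkeeping is apt, since a literal tracking of the constants from \eqref{9below} and \eqref{9n} yields a bound of the form $Ck^2$ rather than the stated $2k$, but this does not affect the substance of the necessity claim.
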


\begin{proof}
Let $I^\prime$ and $I^{\prime\prime}$ be two adjoint intervals with $|I^{\prime}|=|I^{\prime\prime}|\le 1$.
Now we substitute $f=\chi_{I^\prime}w^{-\bt}$, where $\bt=\frac{1}{\lb+p-1}$ into \eqref{9}, which is possible by the assumption \eqref{2intro}:
$$\sup\limits_{B}\frac{1}{|B|^\lb}\intl_B \left|S\left(\chi_{I^\prime}w^{-\bt}\right)(y)\right|^pw(y)\,dy
\le k^p\sup\limits_B \frac{1}{|B|^\lb}\intl_B \left|\chi_{I^\prime}(y)w^{-\bt}(y)\right|^pw(y)\,dy
$$
$$=k^p\sup\limits_B \frac{1}{|B|^\lb}\intl_B \chi_{I^\prime}(y)w^{1-\bt p}(y)\,dy$$
where $B=(x-r,x+r)$ is an arbitrary interval. Hence, moreover
$$\sup\limits_{B}\frac{1}{|B|^\lb}\intl_{B\cap I^{\prime\prime}} \left|S\left(\chi_{I^\prime}w^{-\bt}\right)(y)\right|^pw(y)\,dy
\le k^p\sup\limits_B \frac{1}{|B|^\lb}\intl_B \chi_{I^\prime}(y)w^{1-\bt p}(y)\,dy.$$
Similarly to \eqref{9below} we have
$$S\left(\chi_{I^\prime}w^{-\bt}\right)(y)\ge \frac{1}{2|I^\prime|}\intl_{I^\prime}w^{-\bt}(t)\,dt \ \ \ \
\textrm{for} \ \ \ \ y\in I^{\prime\prime}. $$
Consequently,
$$\frac{1}{|I^\prime|^p}\left(\intl_{I^\prime}w^{-\bt}(t)\,dt\right)^p\sup\limits_{B}\frac{1}{|B|^\lb}\intl_{B\cap I^{\prime\prime}}w(y) \,dy
\le (2k)^p\sup\limits_B \frac{1}{|B|^\lb}\intl_B \chi_{I^\prime}(y)w^{\tcr{1-\bt p}}(y)\,dy,$$
i.e.
$$\frac{1}{|I^\prime|}\intl_{I^\prime}w^{-\bt}(t)\,dt
\|\chi_{I^{\prime\prime}}\|_{p,\lb;w}
\le 2k \|\chi_{I^\prime}\|_{p,\lb;w^{1-\bt p}}.$$
Since $\|\chi_{I^{\prime\prime}}\|_{p,\lb;w}\sim \|\chi_{I^{\prime}}\|_{p,\lb;w}$ by f Lemma \ref{lem3},  we arrive at the condition \eqref{10} with $I^\prime$ redenoted by $I$.
\end{proof}

\begin{corollary}\label{cor1}
Let $w$ be a $(p,\lb)$-admissible weight. The condition $w\in \mathcal{A}_{p,\lb}$ is necessary for the boundedness of the singular operator in the weighted Morrey space $L^{p,\lb}(\mathbb{R},w).$
\end{corollary}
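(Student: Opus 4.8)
The plan is to deduce the Corollary from Theorem \ref{lem4lb}. By definition, $w\in\mathcal{A}_{p,\lb}$ means two things: that $w$ is $(p,\lb)$-admissible, which is granted by hypothesis, and that the supremum in condition \eqref{Muckenhoupt} taken over \emph{all} balls is finite. Theorem \ref{lem4lb} already supplies the bound $2k$ for exactly this supremum, but only over intervals of length $|I|\le 1$. Hence the entire content of the Corollary reduces to removing the length restriction, i.e. to showing that intervals with $|I|>1$ impose no new constraint.

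First I would exploit the exact dilation invariance of the Hilbert transform. Writing $D_\delta g(x)=g(x/\delta)$ for $\delta>0$, one has $S D_\delta = D_\delta S$, while a change of variables in the Morrey norm yields $\|D_\delta g\|_{p,\lb;w}=\delta^{\frac{1-\lb}{p}}\|g\|_{p,\lb;w_\delta}$, where $w_\delta(\cdot)=w(\delta\,\cdot)$. Combining the two, the boundedness assumption \eqref{9} for $w$ transfers verbatim, \emph{with the same constant $k$}, to the dilated weight: $\|Sg\|_{p,\lb;w_\delta}\le k\|g\|_{p,\lb;w_\delta}$ for every $\delta>0$. One checks in the same manner that each $w_\delta$ inherits admissibility \eqref{1introcop}--\eqref{2introcop} from $w$, so that Theorem \ref{lem4lb} may legitimately be applied to $w_\delta$.

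Next I would track how the three factors of the $\mathcal{A}_{p,\lb}$-functional behave under the dilation. For an interval $J$ the change of variables $z=\delta y$ gives $\|\chi_J\|_{p,\lb;w_\delta}=\delta^{-\frac{1-\lb}{p}}\|\chi_{\delta J}\|_{p,\lb;w}$, the identical identity with $w_\ast$ in place of $w$ (using $(w_\delta)_\ast=(w_\ast)_\delta$), and $\frac{1}{|J|}\int_J w_\delta^{-\frac{1}{\lb+p-1}}\,dy=\frac{1}{|\delta J|}\int_{\delta J}w^{-\frac{1}{\lb+p-1}}\,dz$. The two powers of $\delta$ cancel in the ratio, so the whole expression under the supremum in \eqref{Muckenhoupt}, evaluated for $w_\delta$ on $J$, equals the same expression for $w$ on $\delta J$. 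Applying Theorem \ref{lem4lb} to $w_\delta$ for every $J$ with $|J|\le 1$ therefore bounds the $\mathcal{A}_{p,\lb}$-expression for $w$ on $\delta J$ by $2k$; as $\delta$ and $J$ range, $\delta J$ runs over all intervals, and we recover \eqref{Muckenhoupt} for $w$ over all balls, which together with admissibility gives $w\in\mathcal{A}_{p,\lb}$.

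I expect the delicate point to be bookkeeping rather than a genuine obstacle: one must confirm that no $\delta$-dependent constant survives, that is, that the boundedness constant is preserved \emph{exactly} (not merely up to a factor) under dilation and that the $\mathcal{A}_{p,\lb}$-functional is honestly scale-invariant. Both follow from the homogeneity exponent $\frac{1-\lb}{p}$ occurring symmetrically in numerator and denominator, but it is worth verifying the inheritance of \eqref{1introcop}--\eqref{2introcop} by $w_\delta$ so that the hypotheses of Theorem \ref{lem4lb} are met uniformly in $\delta$.
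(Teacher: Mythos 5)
Your proposal is correct, and it is in fact more careful than the paper, which states the Corollary with no proof at all, treating it as an immediate consequence of Theorem \ref{lem4lb}. You correctly identify the discrepancy that the paper passes over in silence: Theorem \ref{lem4lb} bounds the $\mathcal{A}_{p,\lb}$-functional only over intervals with $|I|\le 1$, whereas Definition \ref{def} takes the supremum over all balls, so a step is genuinely needed to pass from one to the other. Your dilation argument supplies that step, and your bookkeeping is right: $SD_\delta=D_\delta S$, the identity $\|D_\delta g\|_{p,\lb;w}=\delta^{\frac{1-\lb}{p}}\|g\|_{p,\lb;w_\delta}$ transfers \eqref{9} to $w_\delta$ with the same constant $k$, the relation $(w_\delta)_\ast=(w_\ast)_\delta$ holds, the admissibility conditions \eqref{1introcop}--\eqref{2introcop} are inherited by $w_\delta$, and the powers $\delta^{\pm\frac{1-\lb}{p}}$ cancel in the ratio so that the functional evaluated for $w_\delta$ on $J$ equals the functional for $w$ on $\delta J$; letting $\delta$ range over $(0,\infty)$ then recovers \eqref{Muckenhoupt} with the uniform bound $2k$. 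An equivalent and slightly more direct route would be to observe that the restriction $|I^\prime|=|I^{\prime\prime}|\le 1$ in \eqref{9below} is itself superfluous, since $(S\chi_{I^\prime})(x)$ for $x\in I^{\prime\prime}$ is invariant under simultaneous dilation of $I^\prime$, $I^{\prime\prime}$ and $x$; with that observation Lemma \ref{lem3} and Theorem \ref{lem4lb} hold for intervals of arbitrary length and the Corollary follows verbatim. Either way, your argument closes a gap the paper leaves open rather than merely reproducing its (absent) proof.
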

\end{definition}

\section{Norms of characteristic functions of balls in Morrey spaces}.
\setcounter{equation}{0} \setcounter{theorem}{0}

In relation to the weighted Morrey-norms of functions $\chi_{B(x,r)}$ appearing in \eqref{Muckenhoupt}, in this section we give some details on estimation of such norms. 

Note that every simple function belongs to non-weighted Morrey spaces, while it is not the case in general for weighted Morrey spaces.
 Any such belongness for functions $\chi_B$   imposes conditions on the weight, which were already discussed in Section \ref{apriori}. The aim of this section is to shed more light on such belongness and to give some estimations of the norms $\|\chi_B\|_{p,\lb;w}$ involved in the $\mathcal{A}_{p,\lb}$-condition  \eqref{Muckenhoupt} in the case $\Om=\rn.$

\subsection{The non-weighted case}
Let $B(x,r):=\{y\in \rn: |y-x|<r\}$. Fix a ball $B(x_0,r_0).$ The following lemma holds.
\begin{lemma}\label{lem1}
Let $1\le p<\infty, \ \ 0\le \lb\le 1$. The formula
\begin{equation}\label{2}
\left\|\chi_{B(x_0,r_0)}\right\|_{p,\lb}=|B(x_0,r_0)|^\frac{n(1-\lb)}{p}=\left(\om_n r_0^n\right)^\frac{1-\lb}{p}.
\end{equation}
is valid, where $\om_n=|\mathbb{S}^{n-1}|.$
\end{lemma}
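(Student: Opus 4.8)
The plan is to compute the supremum in the definition of $\|\chi_{B(x_0,r_0)}\|_{p,\lb}$ directly. By definition,
$$\left\|\chi_{B(x_0,r_0)}\right\|_{p,\lb}=\sup\limits_{x,r}\left(\frac{1}{|B(x,r)|^\lb}\intl_{B(x,r)}\chi_{B(x_0,r_0)}(y)\,dy\right)^\frac{1}{p}=\sup\limits_{x,r}\left(\frac{|B(x,r)\cap B(x_0,r_0)|}{|B(x,r)|^\lb}\right)^\frac{1}{p}.$$
So the whole task reduces to evaluating the scalar quantity
$$M:=\sup\limits_{x,r}\frac{|B(x,r)\cap B(x_0,r_0)|}{|B(x,r)|^\lb},$$
after which the claimed formula follows by raising to the power $1/p$ and using $|B(x_0,r_0)|=\om_n r_0^n$.

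First I would split the supremum according to the size of the test radius $r$ relative to $r_0$. For large $r$ (say $r\ge r_0$ with $B(x,r)\supseteq B(x_0,r_0)$, which is achievable), the numerator is exactly $|B(x_0,r_0)|=\om_n r_0^n$ while the denominator $|B(x,r)|^\lb=(\om_n r^n)^\lb$ is minimized by taking $r$ as small as possible subject to containing the fixed ball; the extremal choice here is $x=x_0$, $r=r_0$, giving the candidate value $\om_n r_0^n/(\om_n r_0^n)^\lb=(\om_n r_0^n)^{1-\lb}$. For small $r$ (with $B(x,r)\subseteq B(x_0,r_0)$), the numerator is $|B(x,r)|=\om_n r^n$ and the ratio becomes $(\om_n r^n)^{1-\lb}$, which, since $0\le\lb\le 1$ makes $1-\lb\ge 0$, is increasing in $r$ and hence again maximized in the limit $r\uparrow r_0$, yielding the same value $(\om_n r_0^n)^{1-\lb}$.

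The remaining step is to confirm that no intermediate configuration (a test ball that only partially overlaps $B(x_0,r_0)$) beats this value. The key monotonicity fact is that for fixed $r$ the numerator $|B(x,r)\cap B(x_0,r_0)|$ is maximized over $x$ by concentric placement $x=x_0$, reducing every case to the concentric one; then one is left with the single-variable function $r\mapsto \min(\om_n r^n,\om_n r_0^n)/(\om_n r^n)^\lb$, which increases on $(0,r_0]$ and decreases on $[r_0,\infty)$, peaking at $r=r_0$. Thus $M=(\om_n r_0^n)^{1-\lb}$, and taking the $p$-th root gives $\|\chi_{B(x_0,r_0)}\|_{p,\lb}=(\om_n r_0^n)^{(1-\lb)/p}=|B(x_0,r_0)|^{n(1-\lb)/p}$ as stated.

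The main obstacle is the verification that concentric test balls are extremal, i.e. that partial overlaps cannot produce a larger ratio than the nested cases. Intuitively, off-center placement only shrinks the intersection volume while the denominator $|B(x,r)|^\lb$ stays the same, so the concentric case dominates; making this rigorous is a routine geometric comparison, but it is the one place where a little care (rather than mere substitution of extremal radii) is needed.
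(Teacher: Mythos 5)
Your computation arrives at the correct value and your overall skeleton (reduce to the scalar supremum, split at $r=r_0$, match the upper bound by testing at $x=x_0$) is the same as the paper's, but you insert one step the paper does not need: the claim that for fixed $r$ the intersection $|B(x,r)\cap B(x_0,r_0)|$ is maximized by the concentric placement $x=x_0$. That claim is true (e.g.\ because $|B(x,r)\cap B(x_0,r_0)|=\bigl(\chi_{B(0,r)}\ast\chi_{B(0,r_0)}\bigr)(x-x_0)$ is a radially decreasing function of $x-x_0$), but you leave it as an asserted ``routine geometric comparison,'' and it is exactly the point you flag as delicate. The paper sidesteps it entirely: for the upper bound it uses the trivial inequality $|B(x,r)\cap B(x_0,r_0)|\le\min\bigl(|B(x,r)|,|B(x_0,r_0)|\bigr)$, valid for \emph{every} $x$, so that for $0<r<r_0$ the ratio is at most $(\om_n r^n)^{1-\lb}\le(\om_n r_0^n)^{1-\lb}$ and for $r>r_0$ it is at most $\om_n r_0^n/(\om_n r^n)^\lb\le(\om_n r_0^n)^{1-\lb}$; partial overlaps are covered automatically and no case distinction on the geometric configuration is required. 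The lower bound then comes, as in your argument, from restricting the supremum to balls centered at $x_0$. So your proof can be completed, but the obstacle you identify dissolves if you bound the numerator by $\min(|B(x,r)|,|B(x_0,r_0)|)$ from the start rather than trying to locate the extremal center.
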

\begin{proof}
By the definition of the norm we have
\begin{equation}\label{3}
\left\|\chi_{B(x_0,r_0)}\right\|_{p,\lb}=\sup\limits_{x,r}\left(\frac{1}{(\om_nr^n)^\lb}|B(x,r)\cap B(x_0,r_0)|\right)^\frac{1}{p}=\max\{\mathcal{A},\mathcal{B}\},
\end{equation}
where
$$\mathcal{A}=\sup\limits_{0<r<r_0}\left(\frac{1}{(\om_n r^n)^\lb}|B(x,r)\cap B(x_0,r_0)|\right)^\frac{1}{p}, \ \ \
\mathcal{B}=\sup\limits_{r>r_0}\left(\frac{1}{(\om_n r^n)^\lb}|B(x,r)\cap B(x_0,r_0)|\right)^\frac{1}{p}.$$
We have
$$\mathcal{A}\le \sup\limits_{0<r<r_0}\left(\frac{|B(x,r)|}{(\om_n r^n)^\lb}\right)^\frac{1}{p}=\sup\limits_{0<r<r_0}
\left(\om_n r^n\right)^\frac{1-\lb}{p}=\left(\om_n r_0^n\right)^\frac{1-\lb}{p}
.$$ The same estimate
$\mathcal{B}\le \left(\frac{|B(x_0,r_0)|}{\om_nr_0^{n\lb}}\right)^\frac{1}{p}$ for $\mathcal{B}$ is obvious, so that
\begin{equation}\label{4}
\left\|\chi_{B(x_0,r_0)}\right\|_{p,\lb}\le |B(x_0,r_0)|^\frac{n(1-\lb)}{p}=\left(\om_n r_0^n\right)^\frac{1-\lb}{p}.
\end{equation}
To obtain  the  inequality inverse to \eqref{4} we observe that
$$\left\|\chi_{B(x_0,r_0)}\right\|_{p,\lb}\ge \sup\limits_{r}\left(\frac{1}{(\om_n r^n)^\lb}|B(x_0,r)\cap
B(x_0,r_0)|\right)^\frac{1}{p}= \sup\limits_{r}\left(\frac{1}{(\om_n r^n)^\lb}|B(x_0,\min(r,r_0))|\right)^\frac{1}{p}$$
$$=\max\left\{\sup\limits_{0<r<r_0}\left(\frac{1}{(\om_n r^n)^\lb}|B(x_0,r)|\right)^\frac{1}{p},
\sup\limits_{r>r_0}\left(\frac{1}{(\om_n r^n)^\lb}|B(x_0,r_0)|\right)^\frac{1}{p}\right\}.$$
Hence
\begin{equation}\label{2copy}
\left\|\chi_{B(x_0,r_0)}\right\|_{p,\lb}\ge\left(\frac{|B(x_0,r_0)|}{(\om_nr_0^n)\lb}\right)^\frac{1}{p}.
\end{equation}
 Combining this with \eqref{4},
we arrive at \eqref{2}.

\end{proof}

\subsection{The weighted case}
In the weighted case we cannot already write a precise formula of type \eqref{2} localized to the point $x_0$,
since the  values of the weight $w$ at the points $x$ different from $x_0$ may already  heavily influence on the
value of the norm $\left\|\chi_{B(x_0,r_0)}\right\|_{p,\lb;w}.$

With the usual notation $w(E)=\int_E w(x)\,dx$ we can  write
\begin{equation}\label{5}
\left\|\chi_{B(x_0,r_0)}\right\|_{p,\lb;w}= \sup\limits_{x\in\rn, r>0:\atop |x-x_0|<r+r_0}\left(\frac{w(B(x,r)\cap
B(x_0,r_0)}{(\om_nr^n)^\lb}\right)^\frac{1}{p},
\end{equation}
where we took into account that $w(B(x,r)\cap
B(x_0,r_0)=\emptyset$ when $|x-x_0|>r+r_0$;
however, \eqref{5} is just a direct usage of the definition of the norm. From \eqref{5} we can derive the following statement.
\begin{lemma}\label{lem2}
The norm $\left\|\chi_{B(x_0,r_0)}\right\|_{p,\lb;w}$ admits the estimate
\begin{equation}\label{6}
\frac{1}{\om_n^\lb}\sup\limits_{0<r<r_0}\left(\frac{w(B(x_0,r)}{r^{n\lb}}\right)^\frac{1}{p}\le \left\|\chi_{B(x_0,r_0)}\right\|_{p,\lb;w}\le \frac{1}{\om_n^\lb} \sup\limits_{|x-x_0|<2r_0 \atop 0<r<r_0}\left(\frac{w(B(x,r)}{r^{n\lb}}\right)^\frac{1}{p}.
\end{equation}
\end{lemma}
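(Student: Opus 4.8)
The starting point is the exact expression \eqref{5} for $\|\chi_{B(x_0,r_0)}\|_{p,\lb;w}$ as a supremum over all balls $B(x,r)$ that meet $B(x_0,r_0)$, i.e. with $|x-x_0|<r+r_0$. Both inequalities in \eqref{6} will be obtained by manipulating this single supremum, after rewriting $(\om_n r^n)^\lb=\om_n^\lb r^{n\lb}$ and pulling the dimensional constant out of the $p$-th root. The admissibility assumption \eqref{1intro} only serves to guarantee that all the quantities involved are finite, so the estimates are read as genuine inequalities rather than a trivial identity $\infty=\infty$.

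For the lower bound I would simply shrink the admissible set in \eqref{5}. Choosing the centre $x=x_0$ and any radius $0<r<r_0$, one has $B(x_0,r)\sbs B(x_0,r_0)$, so the intersection equals $B(x_0,r)$ and the corresponding term of the supremum is exactly $\left(\frac{w(B(x_0,r))}{(\om_n r^n)^\lb}\right)^{1/p}$. Taking the supremum over $0<r<r_0$ yields the left inequality of \eqref{6}.

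For the upper bound I would split the supremum in \eqref{5} according to the size of the radius $r$ relative to $r_0$. If $0<r<r_0$, then a nonempty intersection already forces $|x-x_0|<r+r_0<2r_0$, so the pair $(x,r)$ lies in the range of the right-hand supremum; discarding the intersection by monotonicity of the measure $w$, $w(B(x,r)\cap B(x_0,r_0))\le w(B(x,r))$, bounds this part of the supremum directly by the right-hand side of \eqref{6}. If $r\ge r_0$, I would use $w(B(x,r)\cap B(x_0,r_0))\le w(B(x_0,r_0))$ together with $r^{n\lb}\ge r_0^{n\lb}$ (here $\lb\ge 0$ is used) to obtain the single bound $\frac{w(B(x_0,r_0))}{(\om_n r^n)^\lb}\le \frac{w(B(x_0,r_0))}{(\om_n r_0^n)^\lb}$, independent of $x$ and $r$.

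The one genuinely non-routine point, which I expect to be the main obstacle, is that the boundary value $\frac{w(B(x_0,r_0))}{(\om_n r_0^n)^\lb}$ produced in the case $r\ge r_0$ corresponds to the radius $r_0$ itself, which is excluded from the strict range $0<r<r_0$ of the target supremum. To absorb it, I would invoke continuity of the measure $w\,dx$ from below: since the open balls $B(x_0,r)$ increase to $B(x_0,r_0)=\bigcup_{r<r_0}B(x_0,r)$ as $r\uparrow r_0$, one has $w(B(x_0,r))\to w(B(x_0,r_0))$, whence $\frac{w(B(x_0,r_0))}{(\om_n r_0^n)^\lb}=\lim_{r\uparrow r_0}\frac{w(B(x_0,r))}{(\om_n r^n)^\lb}\le \sup_{0<r<r_0}\frac{w(B(x_0,r))}{(\om_n r^n)^\lb}$, which is dominated by the right-hand supremum of \eqref{6}. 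Combining the two cases and taking $p$-th roots then gives the upper bound.
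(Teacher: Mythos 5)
Your proof is correct and follows essentially the same route as the paper: for the lower bound restrict the supremum in \eqref{5} to $x=x_0$, $0<r<r_0$, and for the upper bound split into the cases $r<r_0$ and $r\ge r_0$, absorbing the second case into the first. The only difference is that you explicitly justify the boundary step $\frac{w(B(x_0,r_0))}{(\om_n r_0^n)^\lb}\le\sup_{0<r<r_0}\frac{w(B(x_0,r))}{(\om_n r^n)^\lb}$ by continuity of the measure $w\,dx$ from below, a point the paper passes over by simply asserting $\mathcal{B}\le\mathcal{A}$.
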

\begin{proof}
The proof is similar to that of Lemma \ref{lem1}: the left-hand side inequality is proved exactly in the same way as the lower bound \eqref{2copy} in Lemma \ref{lem1}, while the validity of the  right-hand side one becomes obvious from the line
$$\left\|\chi_{B(x_0,r_0)}\right\|_{p,\lb;w}\le \max\{\mathcal{A},\mathcal{B}\}$$
with
$$\mathcal{A}=\sup\limits_{|x-x_0|<2r_0\atop 0<r<r_0}\left(\frac{w(B(x,r))}{(\om_n r^n)^\lb}\right)^\frac{1}{p}
$$
and
$$
\mathcal{B}=\sup\limits_{ r>r_0}\left(\frac{w(B(x_0,r_0))}{(\om_n r^n)^\lb}\right)^\frac{1}{p}=
\left(\frac{w(B(x_0,r_0))}{(\om_nr_0^n)^\lb}\right)^\frac{1}{p}\le \mathcal{A}.$$
\end{proof}
The following corollary  provides  a sufficient condition on the weight function $w$ for which bounded functions with a compact support belong to the weighted space $L^{p,\lb}(\rn,w).$
\begin{corollary}\label{cor}
For the characteristic function $\chi_{B(x_0,r_0)}$ of a ball $B(x_0,r_0)$ to belong to the space $L^{p,\lb}(\rn,w)$, the condition
\begin{equation}\label{7}
\sup\limits_{|x-x_0|<2r_0 \atop 0<r<r_0}\frac{w(B(x,r))}{r^{n\lb}}<\infty
\end{equation}
is sufficient, and the condition
\begin{equation}\label{8}
\sup\limits_{0<r<r_0}\frac{w(B(x_0,r))}{r^{n\lb}}<\infty
\end{equation}
is necessary.
\end{corollary}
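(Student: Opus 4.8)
The plan is to read off both claims directly from the two-sided estimate \eqref{6} of Lemma \ref{lem2}, since by definition the assertion $\chi_{B(x_0,r_0)}\in L^{p,\lb}(\rn,w)$ is nothing but the statement that the norm $\|\chi_{B(x_0,r_0)}\|_{p,\lb;w}$ is finite. Thus the entire corollary reduces to determining when each side of \eqref{6} is finite, and the only manipulation required is raising to the power $p$ (or to $1/p$).

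For the sufficiency of \eqref{7}, I would assume that the supremum in \eqref{7} is finite; taking the $1/p$-th power and multiplying by $\om_n^{-\lb}$ reproduces precisely the right-hand side of \eqref{6}, whence $\|\chi_{B(x_0,r_0)}\|_{p,\lb;w}<\infty$ and the characteristic function lies in the space. For the necessity of \eqref{8}, I would instead start from the hypothesis $\|\chi_{B(x_0,r_0)}\|_{p,\lb;w}<\infty$ and invoke the left-hand inequality in \eqref{6}; raising to the $p$-th power then forces the supremum in \eqref{8} to be finite. Since $1\le p<\infty$, both passages are harmless, and the corollary follows.

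There is essentially no obstacle in the argument itself, as all of the analytic content has already been carried out in establishing Lemma \ref{lem2}. The only point worth emphasizing is that the sufficient condition \eqref{7} and the necessary condition \eqref{8} are genuinely different: \eqref{7} controls $w(B(x,r))$ over all centers $x$ in the enlarged ball $|x-x_0|<2r_0$, whereas \eqref{8} constrains only the concentric balls $B(x_0,r)$. This gap is intrinsic to the weighted Morrey setting and reflects exactly the phenomenon noted before \eqref{5}, namely that the values of $w$ at points away from $x_0$ may heavily influence $\|\chi_{B(x_0,r_0)}\|_{p,\lb;w}$. Turning the pair of one-sided conditions into a single equivalent characterization would require additional structural hypotheses on $w$ (for instance a doubling-type relation between off-center and concentric masses), which I would not pursue here.
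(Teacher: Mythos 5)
Your argument is correct and coincides with the paper's (implicit) one: the corollary is read off directly from the two-sided estimate \eqref{6} of Lemma \ref{lem2}, the right-hand inequality giving sufficiency of \eqref{7} and the left-hand one giving necessity of \eqref{8}. Your closing remark on the intrinsic gap between the two conditions matches the discussion preceding \eqref{5}.
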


\begin{remark}\label{rem}
Let  $w(x)=|x-a|^\nu g(x),$ where $ \ a\in \rn$, $g$ is a bounded function with compact support, and   $\nu>-n$. When $a\in B(x_0,r_0),$ then
\begin{equation}\label{8hg}
\chi_{B(x_0,r_0)}\in L^{p,\lb}(\rn,w)
\end{equation}
 if and only if
\begin{equation}\label{8}
\nu\ge {n\lb}-n,
\end{equation}
and
\begin{equation}\label{8su}
\|\chi_B\|_{p,\lb;w}\sim |B|^\frac{n+\nu-{n\lb}}{np}
\end{equation}
in this case.
When $|x_0-a|$ is large enough,  $|x_0-a|>2r_0,$ the inclusion \eqref{8hg} holds for any $\nu>-n$.

\end{remark}

\begin{proof}
Direct estimations via the passage to polar coordinates, dilation change of variables and rotation yield
$$\frac{w(B(x,r))}{(\om_n r^n)^\lb}=\frac{1}{r^{n\lb}}\intl_{|y-x|<r}|y-a|^\nu\,dy=\frac{1}{r^{n\lb}}\intl_{|y|<r}|y-(x-a|^\nu\,dy
=\frac{|x-a|^{n+\nu}}{r^{n\lb}}\intl_{|y|<\frac{r}{|x-a|}}|y-e_1|^\nu\,dy,
$$
where $e_1=(1,0,...,0)$. The remaining integral $I(t)=\intl_{|y|<t}|y-e_1|^\nu\,dy$ is estimated by standard means:
$I(t)\sim \left\{\begin{array}{ll}t^n, \ & 0<t<1\\  t^{n+\nu}, \ & t>1\end{array}\right.,$
where $I(t)\sim \textrm{R.H.S.}$ means that $c_1 \textrm{R.H.S.}\le I(t)\le c_2\textrm{R.H.S.}$ with $c_1$and $c_2$ not depending on $t$.
Then
$$\frac{w(B(x,r))}{r^{n\lb}} \sim \left\{\begin{array}{ll}r^{n-{n\lb}}|x-a|^\nu, \ & r\le |x-a|\\  r^{n+\nu-{n\lb}}, \ & r\ge |x-a|\end{array}\right.$$
from where the statement of the remark follows, with the necessity statement checked directly at the point $x=x_0.$
\end{proof}



\begin{thebibliography}{10}

\bibitem{9eaz}
D.R. Adams.
\newblock A note on {C}hoquet integrals with respect to {H}ausdorff capacity.
\newblock In {\em Function spaces and applications ({L}und, 1986)}.

\bibitem{9e}
D.R. Adams.
\newblock A note on {R}iesz potentials.
\newblock {\em Duke Math. J.}, 42(4):765--778, 1975.

\bibitem{9f}
D.R. Adams and J.~Xiao.
\newblock Nonlinear potential analysis on {M}orrey spaces and their capacities.
\newblock {\em Indiana Univ. Math. J.}, 53(6):1631--1666, 2004.

\bibitem{18f}
J.~Alvarez.
\newblock The distribution function in the {M}orrey space.
\newblock {\em Proc. Amer. Math. Soc.}, 83:693--699, 1981.

\bibitem{26c}
H.~Arai and T.~Mizuhara.
\newblock {M}orrey spaces on spaces of homogeneous type and estimates for
  $\square_b$ and the {C}auchy-{S}zego projection.
\newblock {\em Math. Nachr.}, 185(1):5--20, 1997.

\bibitem{69ad}
V.I. Burenkov, V.~Guliev, and H.~Guliyev.
\newblock Necessary and sufficient conditions for the boundedness of fractional
  maximal operators in local {M}orrey-type spaces.
\newblock {\em J. Comp. Appl. Math.}, 208(1):280 – 301, 2007.

\bibitem{69ac}
V.I. Burenkov and H.~Guliyev.
\newblock Necessary and sufficient conditions for boundedness of the maximal
  operator in local {M}orrey-type spaces.
\newblock {\em Studia Math.}, 163(2):157--176, 2004.

\bibitem{87b}
F.~Chiarenza and M.~Frasca.
\newblock Morrey spaces and {H}ardy-{L}ittlewood maximal function.
\newblock {\em Rend. Math.}, 7:273--279, 1987.

\bibitem{160bz}
G.~Di~Fazio and M.A. Ragusa.
\newblock Commutators and {M}orrey spaces.
\newblock {\em Bollettino U.M.I.}, 7(5-A):323--332, 1991.

\bibitem{107f}
Y.~Ding and S.~Lu.
\newblock Boundedness of homogeneous fractional integrals on ${L}^p$ for
  $n/a<p$.
\newblock {\em Nagoya Math. J.}, 167:17--33, 2002.

\bibitem{187a}
M.~Giaquinta.
\newblock {\em Multiple integrals in the calculus of variations and non-linear
  elliptic systems}.
\newblock Princeton Univ. Press, 1983.

\bibitem{248b}
E.A. Kalita.
\newblock Dual {M}orrey spaces.
\newblock {\em Dokl. Akad. Nauk}, 361(4):447--449, 1998.

\bibitem{317zda}
V.~Kokilashvili and A.~Meskhi.
\newblock A note on the boundedness of the {H}ilbert transform in weighted
  grand {L}ebesgue spaces.
\newblock {\em Georgian Math. J.}, 16(3):547--551, 2009.

\bibitem{326bh}
Y.~Komori and S.~Shirai.
\newblock Weighted {M}orrey spaces and a singular integral operator.
\newblock {\em Math. Nachr.}, 282(2):219--231, 2009.

\bibitem{348a}
A.~Kufner, O.~John, and S.~Fu$\check{c}$ik.
\newblock {\em Function {Spaces}}.
\newblock Noordhoff International Publishing, 1977.
\newblock 454 + XV pages.

\bibitem{405a}
C.B. Morrey.
\newblock On the solutions of quasi-linear elliptic partial differential
  equations.
\newblock {\em Amer. Math. Soc.}, 43:126--166, 1938.

\bibitem{412zz}
E.~Nakai.
\newblock Hardy-{L}ittlewood maximal operator, singular integral operators and
  the {R}iesz potentials on generalized {M}orrey spaces.
\newblock {\em Math. Nachr.}, 166:95--103, 1994.

\bibitem{412z}
E.~Nakai.
\newblock On generalized fractional integrals.
\newblock {\em Taiwanese J. Math.}, 5(3):587--602, 2001.

\bibitem{412zb}
E.~Nakai and H.~Sumitomo.
\newblock On generalized {R}iesz potentials and spaces of some smooth
  functions.
\newblock {\em Sci. Math. Jpn.}, 54(3):463--472, 2001.

\bibitem{463b}
D.K. Palagachev and L.G. Softova.
\newblock Singular integral operators, {M}orrey spaces and fine regularity of
  solutions to {P}{D}{E}'s.
\newblock {\em Potential Analysis}, 20:237--263, 2004.

\bibitem{472a}
J.~Peetre.
\newblock On convolution operators leaving {$\mathcal{L}^{p,\lambda}$} spaces
  invariant.
\newblock {\em Annali di Mat. Pura ed Appl.}, 72(1):295--304, 1966.

\bibitem{473}
J.~Peetre.
\newblock On the theory of {$\mathcal{L}_{p,\lambda}$} spaces.
\newblock {\em Function. Analysis}, 4:71--87, 1969.

\bibitem{479zzab}
E.-L. Persson and N.~Samko.
\newblock Weighted {H}ardy and potential operators in the generalized {M}orrey
  spaces.
\newblock {\em J. Math. Anal. Appl.}, 377:792--806, 2011.

\bibitem{504za}
M.A. Ragusa.
\newblock Commutators of fractional integral operators on {V}anishing-{M}orrey
  spaces.
\newblock {\em J. of Global Optim.}, 40(1-3):361 -- 368, 2008.

\bibitem{539gc}
N.G. Samko.
\newblock Weighted {H}ardy and potential operators in {M}orrey spaces.
\newblock {\em J. Funct. Spaces and Appl.}
\newblock to appear.

\bibitem{539gb}
N.G. Samko.
\newblock Weighted {H}ardy and singular operators in {M}orrey spaces.
\newblock {\em J. Math. Anal. and Appl.}, 350:56--72, 2009.

\bibitem{600zb}
Y.~Sawano and H.~Tanaka.
\newblock Morrey spaces for non-doubling measures.
\newblock {\em Acta Math. Sin. (Engl. Ser.)}, 21(6):1535--1544, 2005.

\bibitem{621a}
S.~Shirai.
\newblock Necessary and sufficient conditions for boundedness of commutators of
  fractional integral operators on classical {M}orrey spaces.
\newblock {\em Hokkaido Math. J.}, 35(3):683--696, 2006.

\bibitem{638a}
S.~Spanne.
\newblock Some function spaces defined by using the mean oscillation over
  cubes.
\newblock {\em Ann. Scuola Norm. Sup. Pisa}, 19:593--608, 1965.

\bibitem{641a}
G.~Stampacchia.
\newblock The spaces ${L}^{p,\lambda}, {N}^{(p, \lambda)}$ and interpolation.
\newblock {\em Ann. Scuola Norm. Super. Pisa}, 3(19):443--462, 1965.

\bibitem{655c}
M.~E. Taylor.
\newblock {\em Tools for {P}{D}{E}: {P}seudodifferential {O}perators,
  {P}aradifferential {O}perators, and {L}ayer {P}otentials}, volume~81 of {\em
  Math. Surveys and Monogr}.
\newblock AMS, Providence, R.I., 2000.

\bibitem{731c}
C.~T. Zorko.
\newblock Morrey space.
\newblock {\em Proc. Amer. Math. Soc.}, 98(4):586--592, 1986.

\end{thebibliography}

\end{document}